\newcommand{\bburl}[1]{\textcolor{blue}{\url{#1}}}
\newcommand{\monthyear}[1]{%
  \def\@monthyear{\uppercase{#1}}}
\newcommand{\volnumber}[1]{%
  \def\@volnumber{\uppercase{#1}}}
\theoremstyle{plain}
\numberwithin{equation}{section} 
\newtheorem{thm}{Theorem}[section] 
\newtheorem{theorem}[thm]{Theorem}
\newtheorem{lemma}[thm]{Lemma}
\newtheorem{definition}[thm]{Definition}
\newtheorem{corollary}[thm]{Corollary}
\newtheorem{remark}[thm]{Remark}
\numberwithin{table}{section} 
\numberwithin{figure}{section}
\begin{document}

\monthyear{Month Year}
\volnumber{Volume, Number}
\setcounter{page}{1}

\title{Weighted error-sum identities for periodic continued fractions and their generalizations}

\author{
\name{Kevin Calderon\textsuperscript{a} and Nikita Kalinin\textsuperscript{b,c}\thanks{Email address: nikita.kalin@technion.ac.il}}
\affil{\textsuperscript{a}Departamento de Matem\'aticas, Facultad de Ciencias, Universidad Nacional Aut\'onoma
de M\'exico, 04510 Ciudad de M\'exico, Mexico; 
\textsuperscript{b}Technion Israel Institute of Technology, Faculty of Mathematics,  Haifa, 3200003, Israel;
      	\textsuperscript{c}Guangdong Technion Israel Institute of Technology (GTIIT), 241 Daxue Road,
Shantou, Guangdong Province 515603, P.R. China}
}

\maketitle

{\bf Article type}: research 
\bigskip

\begin{abstract}For a purely \(N\)-periodic continued fraction
\[
\xi=[\overline{a_0,a_1,\dots,a_{N-1}}]
\]
with convergents \(h_n/k_n=[a_0,a_1,\dots,a_n]\), we obtain explicit
expressions for the weighted error sums
\[
f_\xi(s)=\sum_{n=-1}^{\infty}a_{n+1}|h_n-\xi k_n|^s.
\]
A key observation is that, for each residue class
\(r\in\{-1,0,\dots,N-2\}\), the subsequence of approximation errors
\[
\varepsilon_n=h_n-\xi k_n,\qquad n\equiv r\pmod N,
\]
forms a geometric progression. 

In addition, we extend our methods to generalized continued fractions with 
numerators $(b_n)$, obtaining Euler–type identities and weighted error–sum 
formulae for $\pi$ and $\ln 2$.  

\end{abstract}

\begin{keywords}
	continued fractions; quadratic irrationals; convergents; approximation errors; error-sum functions; generalized continued fractions; Euler continued fractions; Pell equations; algebraic units
\end{keywords}

\section{Introduction. }

For $\xi\in \mathbb R$, we can recursively define $a_0\in \mathbb{Z}$ and  $a_{n}\in \mathbb{Z}_{\geq 1}, n\geq 1$ using the following rule

\[
r_0=\xi,\qquad a_k=\lfloor r_k\rfloor,\qquad
r_{k+1}=\frac{1}{r_k-a_k}.
\]

For rational $\xi$ this process eventually terminates, and the following expression is called a continued fraction for $\xi$
    
$$\xi=\quad a_0+\cfrac{1}{a_1+\cfrac{1}{\ddots +\cfrac{1}{a_n}}}=\left[a_0, a_1, \ldots, a_n\right].$$

For an irrational real number $\xi$ we write  

$$\xi=a_0+\cfrac{1}{a_1+\cfrac{1}{a_2+\cfrac{1}{a_3+\ldots}}}=\left[a_0, a_1, \ldots\right].$$

The integers $a_i$ are called the coefficients of the continued fraction, and the convergents to $\xi$ i.e. the truncations 

$$\frac{h_n}{k_n}=a_0+\cfrac{1}{a_1+\cfrac{1}{\ddots +\cfrac{1}{a_n}}}=[a_0,\cdots,a_n],$$

may be computed using the recursions

$$h_{-2}=0, k_{-2}=1, h_{-1}=1,k_{-1}=0$$

$$h_n=a_nh_{n-1}+h_{n-2},$$

$$k_{n}=a_nk_{n-1}+k_{n-2}.$$
 
It is a classical fact that $\frac{h_n}{k_n}\to \xi$ as $n\to \infty$ which justifies the above equality sign between $\xi$ and the infinite continued fraction. If the coefficients of the continued fraction are such that for all $n> n_0$,  $a_{n+N}=a_n$, then we say that the continued fraction of $\xi$ is eventually periodic, denoted by

$$\xi=[a_0,\cdots, a_{n_0},\overline{a_{n_0+1},\cdots,a_{n_0+N}}].$$
This type of continued fractions is characterized by the following theorem 

\begin{theorem}[Lagrange]\label{th1}
A real number $\xi$ has an eventually periodic continued fraction if and only if $\xi$ is the root of a quadratic polynomial with rational coefficients.
\end{theorem}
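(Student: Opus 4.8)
The plan is to treat the two implications separately, since one is routine algebra while the other carries the real content. For the direction \emph{periodic $\Rightarrow$ quadratic}, I would first isolate the purely periodic tail. Writing $\theta=[\overline{a_{n_0+1},\dots,a_{n_0+N}}]$, periodicity gives $\theta=[a_{n_0+1},\dots,a_{n_0+N},\theta]$. Reading the repeated block as a M\"obius transformation in its last argument, there are integers $P,P',Q,Q'$ (the convergent numerators and denominators of the block) with $\theta=\frac{P\theta+P'}{Q\theta+Q'}$; clearing denominators yields $Q\theta^2+(Q'-P)\theta-P'=0$, so $\theta$ is a quadratic irrational. Since $\xi=[a_0,\dots,a_{n_0},\theta]=\frac{h_{n_0}\theta+h_{n_0-1}}{k_{n_0}\theta+k_{n_0-1}}\in\QQ(\theta)$ and $\QQ(\theta)$ has degree $2$ over $\QQ$, the irrational $\xi$ is again a root of a rational quadratic.

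For the direction \emph{quadratic $\Rightarrow$ periodic}, let $a\xi^2+b\xi+c=0$ with $a,b,c\in\ZZ$, $a\neq0$, discriminant $D=b^2-4ac$ a positive non-square, and let $\xi'$ be the conjugate root. The idea is to track the complete quotients $r_n$. From the identity $\xi=\frac{r_n h_{n-1}+h_{n-2}}{r_n k_{n-1}+k_{n-2}}$, substituting into the quadratic and clearing denominators shows that each $r_n$ satisfies $A_n r_n^2+B_n r_n+C_n=0$ with integer coefficients
\[
A_n=a h_{n-1}^2+b h_{n-1}k_{n-1}+c k_{n-1}^2,\qquad C_n=A_{n-1},
\]
and, using $h_{n-1}k_{n-2}-h_{n-2}k_{n-1}=\pm1$, the discriminant is invariant: $B_n^2-4A_nC_n=D$.

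The crux is to bound these coefficients. Factoring $f(t)=a(t-\xi)(t-\xi')$ and observing $A_n=k_{n-1}^2 f(h_{n-1}/k_{n-1})$, I would rewrite $A_n=a\,(h_{n-1}-\xi k_{n-1})(h_{n-1}-\xi' k_{n-1})$. The convergent estimate $|h_{n-1}-\xi k_{n-1}|<1/k_{n-1}\le1$ together with $|h_{n-1}-\xi' k_{n-1}|\le|h_{n-1}-\xi k_{n-1}|+|\xi-\xi'|\,k_{n-1}$ then yields $|A_n|<|a|\,(1+|\xi-\xi'|)$, a bound independent of $n$. Since $C_n=A_{n-1}$ obeys the same bound and $B_n^2=D+4A_nC_n$, the three sequences take only finitely many integer values. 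Hence some triple $(A,B,C)$ recurs for at least three indices; as $At^2+Bt+C$ has at most two roots, two of them, say $m<n$, satisfy $r_m=r_n$. Because each complete quotient determines the whole tail of the expansion via $a_k=\lfloor r_k\rfloor$ and $r_{k+1}=1/(r_k-a_k)$, this forces $a_{m+j}=a_{n+j}$ for all $j\ge0$, i.e. the continued fraction is eventually periodic with period dividing $n-m$.

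The main obstacle is precisely this boundedness step: the finiteness of the possible $(A_n,B_n,C_n)$ is what makes the pigeonhole argument run, and it rests on the quality of rational approximation by convergents, $|h_{n-1}-\xi k_{n-1}|<1/k_{n-1}$, fed into the factorization $A_n=a(h_{n-1}-\xi k_{n-1})(h_{n-1}-\xi' k_{n-1})$. Everything else—the M\"obius bookkeeping, the discriminant invariance, and the final pigeonhole—is comparatively mechanical.
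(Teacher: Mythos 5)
Your argument is the classical proof of Lagrange's theorem and it is correct: the easy direction via the M\"obius fixed-point equation for the periodic tail, and the substantive direction via the uniform bound $|A_n|<|a|(1+|\xi-\xi'|)$ on the coefficients of the quadratics satisfied by the complete quotients, followed by the pigeonhole on three indices. The paper itself states Theorem~\ref{th1} as a classical background result without proof, so there is nothing to compare against; your write-up correctly identifies the one nontrivial step (the boundedness of $(A_n,B_n,C_n)$, resting on $|h_{n-1}-\xi k_{n-1}|<1/k_{n-1}$ and the factorization $A_n=a(h_{n-1}-\xi k_{n-1})(h_{n-1}-\xi' k_{n-1})$) and handles the ``three indices, two roots'' subtlety properly.
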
 

  Such $\xi$ is called a quadratic irrationality. If $a_{n+N}=a_{n}$ for all $n\geq 0$, then we will say that the continued fraction is purely periodic, that is, $\xi=[\overline{a_0,\cdots a_{N-1}}]$.  It follows from Theorem~\ref{th1} that $\xi=\frac{a+b\sqrt{c}}{d}$ for $a,b,c,d\in\mathbb Z$, so we define the conjugate of $\xi$ by $\bar{\xi}=\frac{a-b\sqrt{c}}{d}$. Using conjugates simplifies the characterization of the set of purely periodic continued fractions.

\begin{theorem}[Galois]\label{th2}
 Quadratic irrationality $\xi$ is purely periodic if and only if $\xi>1$ and $-1<\bar{\xi} <0$. 

\end{theorem}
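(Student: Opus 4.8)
The plan is to prove the two implications separately, working throughout with the complete quotients $r_n$ of the expansion (so $r_0=\xi$, $r_{n+1}=1/(r_n-a_n)$, $a_n=\lfloor r_n\rfloor$) together with their algebraic conjugates $\bar r_n$. The engine of the whole argument is that conjugation over $\QQ(\sqrt c)$ is a field automorphism: applying it to the identity $r_n-a_n=1/r_{n+1}$ yields the companion recursion $\bar r_{n+1}=1/(\bar r_n-a_n)$, equivalently $\bar r_n=a_n+1/\bar r_{n+1}$. I will use this conjugate recursion repeatedly.

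For the forward implication, suppose $\xi=[\overline{a_0,\dots,a_{N-1}}]$. Pure periodicity means $a_0=a_N\ge 1$, so $\xi=a_0+1/r_1>1$ at once. It also means $r_N=r_0=\xi$, so expanding one full period and invoking the standard convergent identity $[a_0,\dots,a_{N-1},x]=(h_{N-1}x+h_{N-2})/(k_{N-1}x+k_{N-2})$ with $x=\xi$ gives the fixed-point relation $\xi=(h_{N-1}\xi+h_{N-2})/(k_{N-1}\xi+k_{N-2})$, i.e. $\xi$ is a root of
$$f(x)=k_{N-1}x^2+(k_{N-2}-h_{N-1})x-h_{N-2}.$$
Its second root is exactly $\bar\xi$. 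I would then locate $\bar\xi$ by a sign check: $f(0)=-h_{N-2}<0$, while $f(-1)=(k_{N-1}-k_{N-2})+(h_{N-1}-h_{N-2})>0$, each grouped difference being nonnegative and not both vanishing because $(h_n)$ and $(k_n)$ are increasing. Hence $f$ changes sign on $(-1,0)$ and has a root there; since the root $\xi>1$ is already accounted for, the root in $(-1,0)$ must be $\bar\xi$, giving $-1<\bar\xi<0$.

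For the converse, assume $\xi>1$ and $-1<\bar\xi<0$. The first step is an induction showing the band $(-1,0)$ is preserved: if $-1<\bar r_n<0$, then since $a_n\ge 1$ we have $\bar r_n-a_n<-1$, so $\bar r_{n+1}=1/(\bar r_n-a_n)\in(-1,0)$. Consequently $-1/\bar r_{n+1}=a_n-\bar r_n\in(a_n,a_n+1)$, which yields the crucial backward formula $a_n=\lfloor -1/\bar r_{n+1}\rfloor$: every partial quotient is recoverable from the conjugate of the next complete quotient. Now Theorem~\ref{th1} (Lagrange) makes the expansion eventually periodic, so the $r_n$ repeat; placing the first repeated pair far out we may choose $1\le m<n$ with $r_m=r_n$, whence $\bar r_m=\bar r_n$. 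The backward formula then forces $a_{m-1}=\lfloor -1/\bar r_m\rfloor=\lfloor -1/\bar r_n\rfloor=a_{n-1}$, and therefore $r_{m-1}=a_{m-1}+1/r_m=a_{n-1}+1/r_n=r_{n-1}$. Iterating this descent $m$ times reaches $r_0=r_{n-m}$, and the equality $r_0=r_{n-m}$ propagates forward (the same partial quotient and hence the same next complete quotient appear at every shifted index) to give $a_k=a_{k+(n-m)}$ for all $k\ge 0$, i.e. pure periodicity.

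I expect the converse to be the main obstacle. The forward direction is a routine root-localization once the quadratic $f$ is written down, but the converse rests on two ideas that must be installed carefully: the invariance of the interval $(-1,0)$ under conjugate iteration, and the observation that $a_n=\lfloor -1/\bar r_{n+1}\rfloor$ lets one run the normally one-directional continued-fraction algorithm \emph{backwards}. The most delicate point is ensuring the repetition index can be taken with $m\ge 1$, so that the backward step is legitimate all the way down to $r_0$; this is exactly what eventual periodicity guarantees, since it lets us push the first repeated pair of complete quotients arbitrarily far out.
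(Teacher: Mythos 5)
Your proof is correct and complete. Note that the paper itself states Galois' criterion (Theorem~\ref{th2}) as classical background and offers no proof of it, so there is nothing internal to compare against; what you have written is the standard argument (essentially Galois'/Perron's): the forward direction by root localization of the fixed-point quadratic $k_{N-1}x^2+(k_{N-2}-h_{N-1})x-h_{N-2}$ on $(-1,0)$, and the converse by showing the conjugate complete quotients $\bar r_n$ stay in $(-1,0)$, recovering $a_n=\lfloor -1/\bar r_{n+1}\rfloor$, and running the expansion backwards from a repeated complete quotient supplied by Lagrange's theorem. The details check out: $f(0)=-h_{N-2}<0$ and $f(-1)=(k_{N-1}-k_{N-2})+(h_{N-1}-h_{N-2})>0$ in all cases including $N=1$; the descent $r_m=r_n\Rightarrow r_{m-1}=r_{n-1}$ is legitimate because the repetition index can be pushed out so that $m\ge 1$. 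This material dovetails with the paper's later use of the same fixed-point quadratic (in the second proof of Theorem~\ref{th3} and in Theorem~\ref{units}, which invokes Galois' criterion in exactly the form you prove here).
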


In \cite{kalinin2019tropical,kalinin2024legendre}, Kalinin and Shkolnikov provide a generalization of Archimedes' exhaustion method (motivated by the study of sandpiles in convex domains), which, when applied to the triangle with vertices $(0,0)$, $(1,0)$, and $(0,\xi)$ yields the following identity for  an irrational number $\xi$:

\begin{equation}
\sum_{n=-1}^{\infty}a_{n+1}(h_n-\xi k_n)^2=\xi. 
\label{eq:frist}
\end{equation}

The terms $h_n-\xi k_n$ in \eqref{eq:frist} are known in the literature as the errors in the approximation of the continued fraction of $\xi$. 

The study of error terms arising from continued fractions has a long history,
motivated by the remarkable quality of Diophantine approximations provided by
convergents. Given a real number \(\xi\) with convergents \(h_n/k_n\), the error
\[
\varepsilon_n=h_n-\xi k_n
\]
measures the local quality of the approximation. In the quadratic case, the
continued fraction of \(\xi\) is eventually periodic, and this periodicity often
forces additional algebraic structure on the error sequence.

The first systematic analysis of error–sum functions appears in 
\cite{ridley2000error},
where the series
\[
   \sum_{n\ge0} |k_n \xi - h_n|
\]
was investigated for general $\xi\in\mathbb R$.
Subsequent developments focused particularly on the case when $\xi$ is a
quadratic irrational.  In this setting, the continued fraction expansion is
eventually periodic.  Explicit evaluations of error sums were obtained  
in \cite{elsner2014error}, where closed forms are 
derived for $\sqrt d$ and expressed in terms of Lucas and Pell numbers for $\sqrt 5$ and $\sqrt 2$.
Later, analytic refinements and weighted variants were developed in \cite{elsner2011error}
where generating functions
of the form
\[
   \sum_{n\ge0} \varepsilon_n x^n
\]
were studied and shown to admit algebraic structure for a quadratic irrationality $\xi$. An interesting identity 
$$\sum_{n\geq 0}|k_ne-h_n|=e\left(-1+10\sum_{n\geq 0}\frac{(-1)^n}{(n+1)!(2n^2+7n+3)}\right)$$
for $e$ were obtained in \cite{allouche2014variations}. 
Generalizations with ``split denominators'' and more flexible summation
structures were introduced in \cite{baruchel2016error},
offering a broader framework in which convergent--based expansions arise
naturally and reveal new identities.

More recent work has also explored fractal behavior and coprime--restricted
summation in error functions, most notably \cite{ahn2025m},
showing that such expressions can encode geometric and measure--theoretic
information when $\xi$ varies.
These results illustrate the diversity and continued activity of research in
this area.

In this work, we introduce and analyze the weighted error series
\[
   f_\xi(s) = \sum_{n\ge -1} a_{n+1}\,|h_n - \xi k_n|^s,\qquad s\ge1,
\]
for $\xi=[\overline{a_0,a_1,\dots,a_{N-1}}]$ purely periodic.
We show that the error sequence splits into $N$ geometric subsequences
\[
   \varepsilon_{mN+r} = \varepsilon_r\,\rho^m,
\qquad r\in\{-1,0,\dots,N-2\},
\]
a simple and nice result which we could not find in the literature. This leads to explicit closed formulae for $f_\xi(s)$ in terms of the
initial segment and the ratio $\rho$.  
In particular, for integer $s\ge1$, each subseries admits a simple rational
description in $\mathbb Q(\xi)$,
thereby providing a unified generalization of earlier identities for $s=1$.

\textbf{Structure of the paper.}
In Section~\ref{s2}, we establish the geometric decomposition of the 
error terms for purely periodic quadratic irrationals and derive explicit closed 
formulae for the weighted sums $f_\xi(s)$.  We present 
three independent proofs of this decomposition: via matrix actions on convergent 
pairs, via algebraic conjugation in $\mathbb{Q}(\xi)$, and via complete quotients of the 
continued fraction.

In Section~\ref{s3} we interpret the common ratio in terms
of algebraic units and explain when the corresponding unit is the fundamental
unit of the underlying quadratic order or field.  
Finally, in
Section~\ref{s4} we extend the argument to 
generalized Euler--type continued fractions with numerators $(b_n)$, leading to 
similar identities for error sums.  Using these error sums we obtain
formulae for $\pi$ and $\ln 2$.

\section{The main theorem and its three proofs}\label{s2}

\begin{theorem}\label{th3}
Suppose $\xi = [\overline{a_0, \dots, a_{N-1}}]$ is a purely periodic quadratic irrationality. Then the sequence of approximation errors $\varepsilon_n = h_n - \xi k_n$ for $n \ge -1$ can be decomposed into $N$ geometric subsequences. Namely, there exists a real number $\rho\in\mathbb Q(\xi)$ such that for any residue class $r \in \{-1, 0, \dots, N-2\}$ and any integer $m \ge 0$ we have

$$\varepsilon_{mN+r} = \varepsilon_r \cdot \rho^m.$$
\end{theorem}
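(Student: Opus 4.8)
The plan is to encode the convergent recursion in matrix form and then exploit periodicity to collapse the whole claim into a single eigenvector computation. Writing $M_j = \begin{pmatrix} a_j & 1 \\ 1 & 0\end{pmatrix}$, the standard recursion together with the boundary values $h_{-1}=1,\ k_{-1}=0,\ h_{-2}=0,\ k_{-2}=1$ gives $\begin{pmatrix} h_n & h_{n-1} \\ k_n & k_{n-1}\end{pmatrix} = M_0 M_1 \cdots M_n$. Let $P = M_0 M_1 \cdots M_{N-1} = \begin{pmatrix} h_{N-1} & h_{N-2} \\ k_{N-1} & k_{N-2}\end{pmatrix}$ be the one-period monodromy matrix. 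Because $a_{j+N}=a_j$, grouping the product $M_0\cdots M_{mN+r}$ into $m$ full periods followed by the tail $M_0\cdots M_r$ yields $\binom{h_{mN+r}}{k_{mN+r}} = P^m \binom{h_r}{k_r}$. This is the first key step, and it is a routine bookkeeping computation once periodicity is invoked.

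Next I would identify the relevant eigenvector of $P$. Pure periodicity means the $N$-th complete quotient returns to $\xi$, i.e. $r_N = \xi$; substituting into $\xi = (h_{N-1}r_N + h_{N-2})/(k_{N-1}r_N + k_{N-2})$ shows that $\xi$ is a fixed point of the Möbius map attached to $P$, equivalently $P\binom{\xi}{1} = \lambda\binom{\xi}{1}$ with $\lambda = k_{N-1}\xi + k_{N-2}$. Now observe that the error is the value of the linear functional $\varepsilon_n = h_n - \xi k_n = (1,-\xi)\binom{h_n}{k_n}$, and that the row vector $(1,-\xi)$ annihilates that eigenvector, since $(1,-\xi)\binom{\xi}{1}=0$. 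By the biorthogonality of left and right eigenvectors of a $2\times 2$ matrix with distinct eigenvalues (distinct here because $\xi\neq\bar\xi$), the row vector $(1,-\xi)$ is therefore a left eigenvector of $P$, say $(1,-\xi)P = \rho\,(1,-\xi)$, where $\rho$ is the second eigenvalue.

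Combining the two steps closes the argument at once:
$$\varepsilon_{mN+r} = (1,-\xi)\,P^m\binom{h_r}{k_r} = \rho^m\,(1,-\xi)\binom{h_r}{k_r} = \rho^m\,\varepsilon_r.$$
To match the statement I would finally check that $\rho$ lies in $\mathbb{Q}(\xi)$: from $\lambda\rho = \det P = (-1)^N$ and $\lambda+\rho = \operatorname{tr}P \in \mathbb{Z}$, or directly from $\rho = k_{N-1}\bar\xi + k_{N-2}$ together with $\bar\xi = (\xi+\bar\xi) - \xi \in \mathbb{Q}(\xi)$, one sees $\rho\in\mathbb{Q}(\xi)$; moreover $\rho$ is manifestly independent of the residue $r$, as required.

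The only genuinely delicate point I anticipate is the justification that pure periodicity forces $r_N = \xi$, so that $\binom{\xi}{1}$ is an honest eigenvector rather than a near-fixed point, together with the accompanying verification that the two eigenvalues of $P$ are distinct (needed for the left-eigenvector conclusion). Everything else is bookkeeping. An alternative route avoiding matrices would start from the closed form $\varepsilon_n = (-1)^{n+1}/(k_n r_{n+1}+k_{n-1})$ and the periodicity $r_{n+1+N}=r_{n+1}$ of the complete quotients, but showing that the ratio $\varepsilon_{n+N}/\varepsilon_n$ is independent of $n$ again reduces to the action of $P$, so I would keep the eigenvector argument as the backbone.
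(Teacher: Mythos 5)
Your argument is correct, and it is a close cousin of the paper's first (matrix) proof, but the mechanism you use to extract the geometric progression is genuinely different and worth contrasting. The paper's matrix proof propagates the error vector $(\varepsilon_n,\varepsilon_{n-1})^T$ by the period matrix, identifies $(\xi,1)^T$ as the expanding eigenvector, and then argues that because $\varepsilon_{nN+k}\to 0$ the component along that eigenvector must vanish --- i.e.\ it imports the classical convergence $h_n/k_n\to\xi$ as an external input. You instead propagate the convergent vector $(h_n,k_n)^T$, view $\varepsilon_n$ as the value of the linear functional $(1,-\xi)$ on it, and observe that this functional kills the eigenvector $(\xi,1)^T$, hence by biorthogonality is a left eigenvector for the second eigenvalue $\rho$. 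This is purely algebraic, needs no limit argument, and in coordinates the relation $(1,-\xi)P=\rho\,(1,-\xi)$ is exactly the pair of identities $h_{N-1}-\xi k_{N-1}=\rho$ and $h_{N-2}-\xi k_{N-2}=-\rho\xi$ that the paper's second, matrix-free proof derives by hand from Vieta's formulas --- so your proof in effect unifies the paper's first two arguments. Two small points to tidy up: when invoking biorthogonality you should note not just $\xi\neq\bar\xi$ but also $k_{N-1}\geq 1$, so that the eigenvalues $k_{N-1}\xi+k_{N-2}$ and $k_{N-1}\bar\xi+k_{N-2}$ are indeed distinct (and, once a row vector $w$ with $wv_\lambda=0$ is known to be a left eigenvector, you should rule out that its eigenvalue is $\lambda$, which follows since the left $\lambda$-eigenvector does not annihilate $v_\lambda$); and since the theorem's index range starts at $n=-1$, it is worth remarking that the second column of your identity $M_0\cdots M_{mN+r}=P^m(M_0\cdots M_r)$ with $r=0$ delivers the case $\varepsilon_{mN-1}=\rho^m\varepsilon_{-1}$ as well. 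Neither is a gap, just a line each. Optionally, noting $|\rho|=1/|u|<1$ (as in the paper's Lemma~\ref{lm1}) makes the word ``geometric'' quantitatively meaningful and is needed for the later summation theorems.
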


We generalize identity \eqref{eq:frist} by defining the series for $s \ge 1$

\begin{equation}
f_{\xi}(s)=\sum_{n=-1}^{\infty}a_{n+1}|h_n-\xi k_n|^s.  
\label{eq:second}
\end{equation}

\begin{definition}

 Let $\xi$ be purely periodic with period $N$. For each residue class $k \in \{-1, 0, \dots, N-2\}$, we define the sub-series
 
 $$\beta_{k}(s) = \sum_{\substack{j \ge -1 \\ j \equiv k \pmod N}} |h_j-\xi k_j|^s.$$

\end{definition}

 Using this definition, we can rearrange \eqref{eq:second} as

 $$f_{\xi}(s)=\sum_{i=-1}^{N-2} a_{i+1} \beta_{i}(s).$$

\begin{theorem}\label{th4}

Let $\xi$ be purely periodic and let $s>0$.
For each residue class \(r\in\{-1,0,\dots,N-2\}\), the series \(\beta_r(s)\) is geometric with
common ratio $|\rho|^s$. Explicitly,
\[ \beta_r(s)=\frac{|h_r-\xi k_r|^s}{1-|\rho|^s}. \]
    
\end{theorem}

In this article, we will give multiple proofs of Theorem ~\ref{th3} using linear algebra and classical number theory. 

The three proofs below illustrate complementary viewpoints:
matrix dynamics, quadratic conjugation, and complete quotients.
Each framework reveals a bit different mechanism behind the geometric decay of errors.  
    
\subsection{First proof: matrix approach}

Put
\[
A_j:=
\begin{pmatrix}
a_j&1\\
1&0
\end{pmatrix},
\qquad a_{j+N}=a_j.
\]
Then
\[
\begin{pmatrix}
\varepsilon_n\\
\varepsilon_{n-1}
\end{pmatrix}
=
A_n
\begin{pmatrix}
\varepsilon_{n-1}\\
\varepsilon_{n-2}
\end{pmatrix}.
\]
For each residue class \(r\in\{-1,0,\dots,N-2\}\), define the corresponding
cyclic period matrix
\[
M_r:=A_{r+N}A_{r+N-1}\cdots A_{r+1}.
\]
The order here is important: the factor \(A_{r+1}\) acts first, then
\(A_{r+2}\), and so on. Hence
\[
\begin{pmatrix}
\varepsilon_{r+N}\\
\varepsilon_{r+N-1}
\end{pmatrix}
=
M_r
\begin{pmatrix}
\varepsilon_r\\
\varepsilon_{r-1}
\end{pmatrix},
\]
and, by iteration,
\[
\begin{pmatrix}
\varepsilon_{r+mN}\\
\varepsilon_{r+mN-1}
\end{pmatrix}
=
M_r^m
\begin{pmatrix}
\varepsilon_r\\
\varepsilon_{r-1}
\end{pmatrix}.
\]

The matrices \(M_r\) are cyclic conjugates of one another. Indeed, if
\[
M_{-1}=A_{N-1}\cdots A_0
\]
and
\[
P_r=A_rA_{r-1}\cdots A_0
\qquad (r\ge0),
\]
then
\[
M_r=P_rM_{-1}P_r^{-1}.
\]
Thus all \(M_r\) have the same two eigenvalues.

To compute these eigenvalues, it is more convenient to use the standard
convergent matrix
\[
R:=A_0A_1\cdots A_{N-1}
=
\begin{pmatrix}
h_{N-1}&h_{N-2}\\
k_{N-1}&k_{N-2}
\end{pmatrix}.
\]
Since the matrices \(A_j\) are symmetric, we have
\[
M_{-1}=R^T.
\]
Therefore \(M_{-1}\) and \(R\) have the same characteristic polynomial and hence
the same eigenvalues.

Because
\[
\xi=[a_0,a_1,\dots,a_{N-1},\xi],
\]
we have
\[
\xi=\frac{h_{N-1}\xi+h_{N-2}}{k_{N-1}\xi+k_{N-2}}.
\]
Equivalently,
\[
R
\begin{pmatrix}
\xi\\
1
\end{pmatrix}
=
(k_{N-1}\xi+k_{N-2})
\begin{pmatrix}
\xi\\
1
\end{pmatrix}.
\]
Thus
\[
u:=k_{N-1}\xi+k_{N-2}
\]
is one eigenvalue of \(R\), and hence also of \(M_{-1}\). Since \(\xi>1\),
we have \(u>1\). Moreover,
\[
\det R=\det M_{-1}=(-1)^N,
\]
so the other eigenvalue is
\[
\rho=\frac{(-1)^N}{u}
=
\frac{(-1)^N}{k_{N-1}\xi+k_{N-2}},
\]
and \(|\rho|<1\).

For each residue class \(r\), the matrix \(M_r\) is conjugate to \(M_{-1}\),
so its eigenvalues are again \(u\) and \(\rho\). On the other hand,
\[
\begin{pmatrix}
\varepsilon_{r+mN}\\
\varepsilon_{r+mN-1}
\end{pmatrix}
\longrightarrow
\begin{pmatrix}
0\\
0
\end{pmatrix}
\qquad (m\to\infty),
\]
because the convergents tend to \(\xi\). Hence the component of
\[
\begin{pmatrix}
\varepsilon_r\\
\varepsilon_{r-1}
\end{pmatrix}
\]
in the expanding eigendirection of \(M_r\) must vanish. Therefore this vector
lies in the contracting eigenspace of \(M_r\), and
\[
M_r
\begin{pmatrix}
\varepsilon_r\\
\varepsilon_{r-1}
\end{pmatrix}
=
\rho
\begin{pmatrix}
\varepsilon_r\\
\varepsilon_{r-1}
\end{pmatrix}.
\]
Consequently,
\[
\varepsilon_{r+mN}=\rho^m\varepsilon_r,
\]
as required.

\subsection{Second Proof. Without Matrices.}

Recall that since $\xi$ is purely periodic with period $N$, we have the identity \cite{niven1991introduction}:
$$
\xi = \frac{h_{N-1}\xi + h_{N-2}}{k_{N-1}\xi + k_{N-2}}.
$$
This is equivalent to the quadratic equation
$$
k_{N-1}\xi^2 + (k_{N-2} - h_{N-1})\xi - h_{N-2} = 0.
$$
Using Vieta's formulas, the product and the sum of the roots $\xi$ and its conjugate $\bar{\xi}$ are:
$$
\xi \bar{\xi} = \frac{-h_{N-2}}{k_{N-1}} \, \ \xi + \bar{\xi} = \frac{h_{N-1} - k_{N-2}}{k_{N-1}} .
$$

\begin{lemma}\label{lm1}
The quantity $u=k_{N-1}\xi + k_{N-2}$ and its conjugate $\bar{u}$ satisfy $u\bar{u} = (-1)^N$. Furthermore, $|u| > 1$ and $|\bar{u}| < 1$.
\end{lemma}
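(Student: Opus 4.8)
The plan is to prove Lemma~\ref{lm1} directly from the algebraic data already assembled. The quantity $u = k_{N-1}\xi + k_{N-2}$ is exactly the eigenvalue $\lambda_1 = C\xi + D$ from the first proof (since $C = k_{N-1}$ and $D = k_{N-2}$), so the two claims $u\bar u = (-1)^N$ and the size estimates are the non-matrix counterpart of ``$\det M_N = (-1)^N$'' together with ``$\lambda_1 > 1$, $|\lambda_2| < 1$.'' First I would compute $u\bar u$. Since $\bar u = k_{N-1}\bar\xi + k_{N-2}$, the product $u\bar u = k_{N-1}^2\,\xi\bar\xi + k_{N-1}k_{N-2}(\xi + \bar\xi) + k_{N-2}^2$. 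Substituting Vieta's relations $\xi\bar\xi = -h_{N-2}/k_{N-1}$ and $\xi + \bar\xi = (h_{N-1} - k_{N-2})/k_{N-1}$ collapses this to
\[
u\bar u = -k_{N-1}h_{N-2} + k_{N-1}k_{N-2}(h_{N-1}-k_{N-2})/1 + k_{N-2}^2,
\]
which after simplification reduces to $k_{N-1}h_{N-2} \cdot(\pm 1)$ combinations; the key cancellation is the identity $h_{N-1}k_{N-2} - h_{N-2}k_{N-1} = (-1)^N$, the standard determinant relation for convergents. So the computation is engineered to land on $u\bar u = (-1)^N$, and I expect this to be the one genuinely load-bearing step.

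Next I would establish $|u| > 1$. Here I reuse the observation from the matrix proof almost verbatim: purely periodic forces $\xi > 1$, every $a_n \ge 1$, and the convergent denominators satisfy $k_{N-1} \ge 1$ and $k_{N-2} \ge 0$. Hence $u = k_{N-1}\xi + k_{N-2} \ge \xi > 1$, giving $u > 1$ and in particular $|u| > 1$. This step is routine and needs only the sign/positivity bookkeeping on the $k_n$.

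The estimate $|\bar u| < 1$ then follows immediately from the first two parts: from $u\bar u = (-1)^N$ we get $|u|\,|\bar u| = 1$, so $|\bar u| = 1/|u| < 1$. This is the cleanest way to sequence the argument — prove the product identity first, the lower bound on $|u|$ second, and read off the upper bound on $|\bar u|$ as a corollary rather than computing $\bar u$ directly (which would require knowing that $\bar\xi \in (-1,0)$ by Galois' theorem and tracking its sign through $k_{N-1}\bar\xi + k_{N-2}$, a strictly more delicate estimate).

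The main obstacle, such as it is, will be the algebraic simplification in the product computation: one must apply both Vieta relations and the determinant identity $h_{N-1}k_{N-2} - h_{N-2}k_{N-1} = (-1)^N$ and verify the terms cancel to exactly $(-1)^N$ with the correct sign rather than, say, $(-1)^{N-1}$. I would double-check the sign by testing a short period (e.g.\ $\xi = [\overline{1}] = \frac{1+\sqrt5}{2}$ with $N=1$, where $u = \xi$ and $u\bar u = \xi\bar\xi = -1 = (-1)^1$) to confirm the convention matches.
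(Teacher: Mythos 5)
Your proposal is correct and follows essentially the same route as the paper's own proof: expand $u\bar u$, substitute the two Vieta relations so that the $k_{N-2}^2$ terms cancel, invoke the determinant identity $h_{N-1}k_{N-2}-h_{N-2}k_{N-1}=(-1)^N$, obtain $|u|>1$ from $\xi>1$, $k_{N-1}\ge1$, $k_{N-2}\ge0$, and deduce $|\bar u|<1$ from the product. The only difference is the order of the steps (the paper bounds $|u|$ first), which is immaterial.
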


\begin{proof}

First, we establish the bound for $u$. Since $\xi$ is purely periodic, we have $\xi > 1$. The period $N \ge 1$ implies $k_{N-1} \ge 1$ and $k_{N-2} \ge 0$. Then, $u = k_{N-1}\xi + k_{N-2} > 1$, that is, $|u|>1$. We take  the product $u\bar{u}$

$$u\bar{u} = (k_{N-1}\xi + k_{N-2})(k_{N-1}\bar {\xi} + k_{N-2})= k_{N-1}^2(\xi\bar{\xi}) + k_{N-1}k_{N-2}(\xi + \bar{\xi}) + k_{N-2}^2.$$

Substituting the Vieta relations, the term $k_{N-2}^2$ cancels, leaving:
$$
u\bar{u} = k_{N-1}(-h_{N-2}) + k_{N-2}(h_{N-1}) = h_{N-1}k_{N-2} - h_{N-2}k_{N-1}.
$$
By the determinant identity for convergents, this equals $(-1)^N$. Finally, since $|u| > 1$ and $|u\bar{u}| = 1$, it follows immediately that $|\bar{u}| < 1$.
\end{proof}

\textbf{Proof of Theorem ~\ref{th3}.}

The general continued fraction identity is
\[
[a_0,\dots,a_{N-1},x]
=
\frac{h_{N-1}x+h_{N-2}}{k_{N-1}x+k_{N-2}}.
\]
For \(n\ge0\), substituting \(x=h_n/k_n\) gives
\[
\frac{h_{n+N}}{k_{n+N}}
=
\frac{h_{N-1}h_n+h_{N-2}k_n}
{k_{N-1}h_n+k_{N-2}k_n}.
\]
Multiplying by the denominator and subtracting \(\xi\), we obtain
\begin{equation}
h_{n+N}-\xi k_{n+N}
=
(h_{N-1}-\xi k_{N-1})h_n
+
(h_{N-2}-\xi k_{N-2})k_n.
\label{eq:error-expand}
\end{equation}

We now determine the two coefficients. Since
\[
h_{N-1}\xi+h_{N-2}
=
\xi(k_{N-1}\xi+k_{N-2})
=
\xi u,
\]
we have
\[
h_{N-2}-\xi k_{N-2}
=
-\xi(h_{N-1}-\xi k_{N-1}).
\]
Moreover, using
\[
h_{N-1}k_{N-2}-h_{N-2}k_{N-1}=(-1)^N,
\]
we get
\[
h_{N-1}-\xi k_{N-1}
=
\frac{(-1)^N}{u}.
\]
Substituting these two identities into \eqref{eq:error-expand}, we find
\[
h_{n+N}-\xi k_{n+N}
=
\frac{(-1)^N}{u}(h_n-\xi k_n),
\qquad n\ge0.
\]

It remains to check the initial residue \(n=-1\). Since
\[
\varepsilon_{-1}=h_{-1}-\xi k_{-1}=1,
\]
and
\[
\varepsilon_{N-1}=h_{N-1}-\xi k_{N-1}
=
\frac{(-1)^N}{u},
\]
the same relation also holds for \(n=-1\). Therefore
\[
\varepsilon_{n+N}=\rho\varepsilon_n,
\qquad n\ge -1,
\]
where
\[
\rho=\frac{(-1)^N}{u}=\bar u.
\]
Iterating gives
\[
\varepsilon_{r+mN}=\rho^m\varepsilon_r,
\qquad
r\in\{-1,0,\dots,N-2\},\quad m\ge0.
\]
Since \(|\bar u|<1\) by Lemma~\ref{lm1}, this proves the theorem.

\subsection{Third Proof. Using Complete Quotients}

We define the sequence of complete quotients $\xi_n$ by the recursion:
$$
\xi_0 = \xi, \qquad
\xi_{n+1} = \frac{1}{\xi_n - a_n}, \qquad n \ge 0,
$$
where $a_n = \lfloor \xi_n \rfloor$.

Recall the fundamental identity:
\[
\xi=\frac{\xi_{n+1}h_n+h_{n-1}}{\xi_{n+1}k_n+k_{n-1}}.
\]
Using
\[
h_nk_{n-1}-h_{n-1}k_n=(-1)^{n-1},
\]
we obtain
\[
\varepsilon_n
=
h_n-\xi k_n
=
\frac{(-1)^{n-1}}{\xi_{n+1}k_n+k_{n-1}}.
\]
Put
\[
Q_n:=\xi_{n+1}k_n+k_{n-1}.
\]
We claim that
\[
Q_n=\xi_1\xi_2\cdots \xi_{n+1}.
\]
Indeed, \(Q_{-1}=1\), and for \(n\ge0\),
\[
Q_n
=
\xi_{n+1}k_n+k_{n-1}
=
\xi_{n+1}(a_nk_{n-1}+k_{n-2})+k_{n-1}.
\]
Since
\[
\xi_n=a_n+\frac1{\xi_{n+1}},
\]
we have
\[
a_n\xi_{n+1}+1=\xi_n\xi_{n+1}.
\]
Therefore
\[
Q_n
=
\xi_{n+1}(\xi_nk_{n-1}+k_{n-2})
=
\xi_{n+1}Q_{n-1}.
\]
The claim follows by induction.

Now use periodicity. Since \(\xi_{j+N}=\xi_j\), we get
\[
\frac{Q_{n+N}}{Q_n}
=
\xi_{n+2}\xi_{n+3}\cdots \xi_{n+N+1}
=
\prod_{i=0}^{N-1}\xi_i.
\]
But
\[
\prod_{i=0}^{N-1}\xi_i
=
k_{N-1}\xi+k_{N-2}
=
u.
\]
Consequently,
\[
\frac{\varepsilon_{n+N}}{\varepsilon_n}
=
(-1)^N\frac{Q_n}{Q_{n+N}}
=
\frac{(-1)^N}{u}
=
\rho.
\]
Thus
\[
\varepsilon_{r+mN}=\rho^m\varepsilon_r,
\qquad r\in\{-1,0,\dots,N-2\},\quad m\ge0.
\]
By Lemma~\ref{lm1}, \(|\rho|<1\). We record the resulting expression for the
common ratio.  

\begin{corollary}
For \(\xi=[\overline{a_0,\dots,a_{N-1}}]\) purely periodic, the common ratio in
Theorem~\ref{th3} is
\[
\rho=\frac{(-1)^N}{u},
\qquad
u=k_{N-1}\xi+k_{N-2}.
\]
Equivalently, since \(u\bar u=(-1)^N\), one has \(\rho=\bar u\).
\end{corollary}

\subsection*{Proof of Theorem \ref{th4}.}
\begin{proof}
By Theorem~\ref{th3}, for each $r\in\{-1,0,\dots,N-2\}$, the terms of the sub-series satisfy \[
|\varepsilon_{mN+r}|=|\varepsilon_r|\,|\rho|^m. \]
Therefore,
$$
\beta_r(s)
=
\sum_{m=0}^\infty
|h_{mN+r} - \xi k_{mN+r}|^s
= |\varepsilon_r|^s \sum_{m=0}^\infty |\rho|^{sm}.
$$
Since $|\rho|<1$, this is a convergent geometric series with sum $\frac{|\varepsilon_r|^s}{1-|\rho|^s}$.
\end{proof}

\begin{corollary}
Suppose \(\xi=[\overline{a_0,\dots,a_{N-1}}]\) is a purely periodic quadratic
irrationality. Then for every integer \(s\ge1\) we have
\[
f_\xi(s)\in\mathbb Q(\xi).
\]
\end{corollary}

\begin{proof}
By Theorem~\ref{th4},
\[
f_\xi(s)
=
\sum_{r=-1}^{N-2}
a_{r+1}\frac{|\varepsilon_r|^s}{1-|\rho|^s}.
\]
For integer \(s\ge1\), all quantities in this expression belong to
\(\mathbb Q(\xi)\). Indeed, \(\varepsilon_r,\rho\in\mathbb Q(\xi)\), and since
the field is real, \(|\alpha|=\pm\alpha\) for every \(\alpha\in\mathbb Q(\xi)\).
Thus \(f_\xi(s)\in\mathbb Q(\xi)\).
\end{proof}

\begin{remark}
For non-integral values of \(s\), the arithmetic nature of \(f_\xi(s)\) is more
delicate; one expects transcendental values in many cases, but we do not pursue
this question here.
\end{remark}

\section{Connection to Period Units.}\label{s3} 
The proofs of Theorem~\ref{th3}, in particular Lemma~\ref{lm1}, show that the
period multiplier
\[
u=k_{N-1}\xi+k_{N-2}
\]
has norm \((-1)^N\). Hence \(u\) is a unit in the quadratic order generated by
the continued-fraction data, and
\[
\rho=\frac{(-1)^N}{u}
\]
is its conjugate. Under the hypotheses of Theorem~\ref{th_unit}, this period
unit is the fundamental unit of the corresponding order.

\begin{theorem}[\cite{halter2013quadratic}, \cite{trifkovic2013algebraic}]
\label{th_unit}
Let \(K=\mathbb Q(\sqrt D)\) be the real quadratic field containing the
purely periodic continued fraction
\[
\xi=[\overline{a_0,\dots,a_{N-1}}]
\]
of primitive period \(N\). Let \(\xi_0,\xi_1,\dots,\xi_{N-1}\) be the complete
quotients in one period. Then
\[
u=k_{N-1}\xi+k_{N-2}
=
\prod_{i=0}^{N-1}\xi_i
\]
is the fundamental unit of the corresponding real quadratic order. In
particular,
\[
N_{K/\mathbb Q}(u)=(-1)^N.
\]

If the displayed period is not primitive but consists of \(q\) repetitions of
the primitive period, then
\[
u=\epsilon^q,
\]
where \(\epsilon\) is the fundamental unit attached to the primitive period.
\end{theorem}

The above theorem is relevant for our purposes because we can interpret the
geometric sequence in Theorem~\ref{th3} as the action of this unit. Namely,
\[
|h_{r+\ell N}-\xi k_{r+\ell N}|
=
|(h_r-\xi k_r)u^{-\ell}|.
\]

Therefore we can express \eqref{eq:second} as
\[
f_{\xi}(s)
=
\frac{u^s}{u^s-1}
\sum_{r=-1}^{N-2}a_{r+1}|h_r-\xi k_r|^s,
\qquad s\ge1.
\]

Thus the value of the series depends only on the first \(N\) error terms and on
the period unit \(u\). When \(u\) is the fundamental unit, this gives an expression
in terms of the fundamental unit of the corresponding quadratic order. Here is a
table with examples of quadratic irrationals and their associated period units.

\begin{center}
\renewcommand{\arraystretch}{1.5}
\begin{tabular}{|c|c|c|c|c|}
\hline
$\xi$ & $D$ & $\epsilon_D$ & $u=k_{N-1}\xi+k_{N-2}$ & $N(u)$ \\
\hline
$[\overline{2}]$ 
& $2$ 
& $1+\sqrt{2}$ 
& $1+\sqrt{2}$ 
& $-1$ \\
\hline
$[\overline{1, 2}]$ 
& $3$ 
& $2+\sqrt{3}$ 
& $2+\sqrt{3}$ 
& $1$ \\
\hline
$[\overline{1}]$ 
& $5$ 
& $\dfrac{1+\sqrt{5}}{2}$ 
& $\dfrac{1+\sqrt{5}}{2}$ 
& $-1$ \\
\hline
$[\overline{2, 4}]$ 
& $6$ 
& $5+2\sqrt{6}$ 
& $5+2\sqrt{6}$ 
& $1$ \\
\hline
$[\overline{1, 1, 1, 4}]$ 
& $7$ 
& $8+3\sqrt{7}$ 
& $8+3\sqrt{7}$ 
& $1$ \\
\hline
\end{tabular}
\end{center}

Write
\[
\xi=c_0+c_1\sqrt{\mathcal D},
\qquad c_0,c_1\in\mathbb Q.
\]
Then
\[
u=k_{N-1}\xi+k_{N-2}
=
x+y\sqrt{\mathcal D},
\]
where
\[
x=k_{N-1}c_0+k_{N-2},
\qquad
y=k_{N-1}c_1.
\]
Therefore
\[
N(u)=x^2-\mathcal D y^2=(-1)^N.
\]
In general \(x\) and \(y\) need not be integers; they are the coordinates of
\(u\) in the basis \(1,\sqrt{\mathcal D}\). Thus the powers of \(u\) give
solutions of the corresponding norm equation in the relevant quadratic order.
Explicitly, if
\[
u^n=x_n+y_n\sqrt{\mathcal D},
\]
then
\[
x_n=\frac{u^n+\bar u^n}{2},
\qquad
y_n=\frac{u^n-\bar u^n}{2\sqrt{\mathcal D}}.
\]
After rewriting these coordinates in an integral basis of the order, one obtains
the usual Pell-type integral equations.

\section{Some identities for Euler continued fractions}\label{s4}

As mentioned in the introduction, the identities
\[
  \sum_{n=-1}^{\infty} a_{n+1}\,(h_n - \xi k_n)^2 = \xi,
  \qquad
  \sum_{n=-1}^{\infty} a_{n+1}\,|h_n - \xi k_n| = \xi + 1,
\]
for irrational $\xi$, derived in~\cite{kalinin2019tropical,kalinin2024legendre}, were the original
motivation for this work. In this section we consider generalized continued
fractions of the form
\[
  \xi = a_0 + \cfrac{b_1}{a_1 + \cfrac{b_2}{a_2 + \cfrac{b_3}{\ddots}}}
\]
with \(a_0\in\mathbb Z\), \(a_n\in\mathbb Z_{>0}\) for \(n\ge1\), and
\(b_n\in\mathbb Z_{>0}\), and show how analogous identities persist
in this setting.

The numerators and denominators of the $n$-th convergent to $\xi$ are defined by
$$h_{-2}=0,\ h_{-1}=1,\  h_0=a_0, \ h_{n+1}=a_{n+1}h_n+b_{n+1} h_{n-1},$$
$$k_{-2}=1,\ k_{-1}=0,\  k_0=1, \ k_{n+1}=a_{n+1}k_n+b_{n+1} k_{n-1}.$$

We define $B_n=\prod_{j=1}^n b_j$ for $n\geq 1$, with $B_{0}=1$.

\begin{theorem}\label{th6}
Let \(\xi\) be represented by the generalized continued fraction above, and put
\[
\varepsilon_n=h_n-\xi k_n.
\]
Assume that
\[
\frac{\varepsilon_M\varepsilon_{M+1}}{B_{M+1}}\to0
\qquad (M\to\infty).
\]
Then
\[
\sum_{n=-1}^{\infty}
a_{n+1}\frac{1}{B_{n+1}}\varepsilon_n^2
=
\xi.
\]

When \(b_n=1\) for all \(n\), this recovers the quadratic identity mentioned at
the beginning of this section.
\end{theorem}

\begin{proof}
Put
\[
c_n=\frac1{B_n}.
\]
For \(n\ge0\), the recurrence relations for \(h_n\) and \(k_n\) give
\[
\varepsilon_{n+1}
=
a_{n+1}\varepsilon_n+b_{n+1}\varepsilon_{n-1}.
\]
Multiplying by \(c_{n+1}\varepsilon_n\), we obtain
\[
a_{n+1}c_{n+1}\varepsilon_n^2
=
c_{n+1}\varepsilon_n\varepsilon_{n+1}
-
b_{n+1}c_{n+1}\varepsilon_n\varepsilon_{n-1}.
\]
Since
\[
b_{n+1}c_{n+1}=c_n,
\]
this becomes
\[
a_{n+1}c_{n+1}\varepsilon_n^2
=
c_{n+1}\varepsilon_n\varepsilon_{n+1}
-
c_n\varepsilon_{n-1}\varepsilon_n.
\]
Summing from \(n=0\) to \(M\), we get
\[
\sum_{n=0}^{M}a_{n+1}c_{n+1}\varepsilon_n^2
=
c_{M+1}\varepsilon_M\varepsilon_{M+1}
-
c_0\varepsilon_{-1}\varepsilon_0.
\]
Now
\[
\varepsilon_{-1}=1,
\qquad
\varepsilon_0=a_0-\xi,
\qquad
c_0=1.
\]
Assuming
\[
c_{M+1}\varepsilon_M\varepsilon_{M+1}\to0,
\]
we obtain
\[
\sum_{n=0}^{\infty}a_{n+1}c_{n+1}\varepsilon_n^2
=
\xi-a_0.
\]
Adding the missing \(n=-1\) term gives
\[
a_0c_0\varepsilon_{-1}^2=a_0.
\]
Therefore
\[
\sum_{n=-1}^{\infty}a_{n+1}c_{n+1}\varepsilon_n^2=\xi.
\]
Since \(c_{n+1}=1/B_{n+1}\), this is the desired identity.
\end{proof}

\begin{remark}
Note that if all \(b_n=1\), then \(B_{n+1}=1\) and we recover the original identity. Furthermore, the proof of Theorem~\ref{th6} relies purely on algebraic telescoping. Thus the same identity holds in any setting, real, complex, or functional, in which the continued fraction converges and
\[
c_{N+1}\varepsilon_N\varepsilon_{N+1}\to0
\qquad (N\to\infty).
\]
\end{remark}

The formula in Theorem~\ref{th6} allows us to obtain several identities from
specific generalized continued fractions. Furthermore, we can think of $\xi$ as a function represented as a continued fraction, so these identities can also be extended to functions. We will present some of these identities based on some representations as continued fractions known in the literature. While preparing the final version of this article we discovered that an equivalent fromulation of Theorem~\ref{th6} appeared in \cite{bhatn} as Eq~10.10.

\subsection{An identity involving $\pi$ }

\begin{remark}

From \cite{euler1785transformatione}

$$\arctan x = \cfrac{x}{1 + \cfrac{x^2}{3-x^2 + \cfrac{(3x)^2}{5-3x^2 + \ddots}}}$$

Setting $x=1$ and multiplying by $4$ yields the generalized continued fraction for $\pi$

$$\pi = \cfrac{4}{1 + \cfrac{1^2}{2 + \cfrac{3^2}{2 + \ddots}}}$$

\end{remark}

The coefficients are $a_0=0, a_1=1$, and $a_{n+1}=2$ for $n\ge 1$. The numerators are $b_1=4$ and $b_n=(2n-3)^2$ for $n\ge 2$. Consequently, for $n \ge 1$, we have

$$B_{n+1} = \prod_{j=1}^{n+1} b_j = 4 \cdot \prod_{j=2}^{n+1} (2j-3)^2 = 4[(2n-1)!!]^2.$$

\begin{lemma}

For $n \geq 1$, the denominators satisfy $k_n = (2n-1)!!$ and the convergents are given by the partial sums of the Leibniz series, $h_n/k_n = 4L_n:=4\sum_{j=0}^{n-1} \frac{(-1)^j}{2j+1}$.

\end{lemma}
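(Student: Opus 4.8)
The plan is to prove both claims simultaneously by strong induction on $n$, since the denominators $k_n$ and the partial numerators $h_n$ satisfy the same linear recurrence $X_{n+1}=a_{n+1}X_n+b_{n+1}X_{n-1}$. First I would record the explicit data from the paragraph preceding the lemma: $a_1=1$, $a_{n+1}=2$ for $n\ge1$, $b_1=4$, and $b_n=(2n-3)^2$ for $n\ge2$. With the initial conditions $k_{-1}=0$, $k_0=1$, the recurrence gives $k_1=a_1k_0+b_1k_{-1}=1$, which matches $(2\cdot1-1)!!=1!!=1$.

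For the denominators, the inductive step is the main computation. Assuming $k_{n-1}=(2n-3)!!$ and $k_n=(2n-1)!!$, I would compute
\[
k_{n+1}=a_{n+1}k_n+b_{n+1}k_{n-1}=2\,(2n-1)!!+(2n-1)^2(2n-3)!!,
\]
using $a_{n+1}=2$ and $b_{n+1}=(2(n+1)-3)^2=(2n-1)^2$ for $n\ge1$. Factoring out $(2n-3)!!$ and using $(2n-1)!!=(2n-1)(2n-3)!!$, the bracket becomes $2(2n-1)+(2n-1)^2=(2n-1)(2+2n-1)=(2n-1)(2n+1)$, so $k_{n+1}=(2n+1)(2n-1)(2n-3)!!=(2n+1)!!=(2(n+1)-1)!!$, closing the induction. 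I would double-check the base case shift carefully, since $b_{n+1}$ only equals $(2n-1)^2$ once $n+1\ge2$, i.e. $n\ge1$, which is exactly the induction range.

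For the convergents, I expect the cleaner route is \emph{not} to guess a closed form for $h_n$ directly but to show $h_n/k_n=4L_n$ by establishing the error-difference $h_n/k_n-h_{n-1}/k_{n-1}$ term by term. The standard identity for generalized continued fractions gives $h_nk_{n-1}-h_{n-1}k_n=(-1)^{n-1}B_n$, hence
\[
\frac{h_n}{k_n}-\frac{h_{n-1}}{k_{n-1}}=\frac{(-1)^{n-1}B_n}{k_nk_{n-1}}.
\]
Substituting $B_n=4[(2n-3)!!]^2$ (valid for $n\ge2$, with the appropriate small-index adjustment) and $k_nk_{n-1}=(2n-1)!!(2n-3)!!=(2n-1)[(2n-3)!!]^2$, the squared double factorials cancel and the difference collapses to $4(-1)^{n-1}/(2n-1)$, which is exactly $4\big(L_n-L_{n-1}\big)$. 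Telescoping from the base value $h_1/k_1=4L_1=4$ then yields $h_n/k_n=4L_n$.

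The main obstacle will be bookkeeping the index shifts and the two small indices ($n=1$ versus $n\ge2$), because the formulas for $b_n$ and $B_n$ have a special value at $j=1$ (namely $b_1=4$ rather than $(2\cdot1-3)^2=1$). I would therefore treat $n=1$ as an explicit base case in both inductions and verify the determinant identity $h_nk_{n-1}-h_{n-1}k_n=(-1)^{n-1}B_n$ separately for the generalized recurrence (it follows by an immediate induction from the recurrence, using $B_{n}=b_nB_{n-1}$), rather than quoting the regular-continued-fraction version. Once that identity and the base cases are pinned down, the rest is the routine algebraic cancellation sketched above.
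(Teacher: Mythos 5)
Your proposal is correct. For the denominators it coincides with the paper's argument: the same two-term induction $k_{n+1}=2k_n+(2n-1)^2k_{n-1}$ with the factorization $2(2n-1)+(2n-1)^2=(2n-1)(2n+1)$, and your attention to the base cases and to the range where $b_{n+1}=(2n-1)^2$ is exactly the care the paper's one-line ``by simple induction'' leaves implicit.

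For the convergents you take a genuinely different route. The paper verifies the closed form directly: it substitutes the recurrences into the quantity $4k_nL_n-h_n$ and checks that this difference satisfies the same recurrence and vanishes at the base indices, i.e.\ an induction on the identity $h_n=4k_nL_n$ itself. You instead prove the generalized determinant identity $h_nk_{n-1}-h_{n-1}k_n=(-1)^{n-1}B_n$ (correctly re-derived for the recurrence with numerators $b_n$, with $D_{n+1}=-b_{n+1}D_n$ and $D_0=-1$), divide by $k_nk_{n-1}=(2n-1)\bigl[(2n-3)!!\bigr]^2$ so that the squared double factorials in $B_n=4\bigl[(2n-3)!!\bigr]^2$ cancel, and telescope the resulting differences $4(-1)^{n-1}/(2n-1)$ from $h_1/k_1=4$. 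Your approach is slightly longer but more transparent: it makes visible \emph{why} the convergents are the Leibniz partial sums (this is precisely the Euler transform mechanism that also underlies the $\ln 2$ example in the next subsection), whereas the paper's verification is shorter but gives no structural explanation. Both arguments are complete and correct; your handling of the exceptional index $n=1$ (where $b_1=4$ breaks the pattern $(2n-3)^2$) is sound.
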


\begin{proof}
The recurrence
\[
k_n=2k_{n-1}+(2n-3)^2k_{n-2},
\qquad n\ge2,
\]
with initial values \(k_1=1\) and \(k_2=3\), gives
\[
k_n=(2n-1)!!
\]
by induction. Similarly, putting
\[
L_n=\sum_{j=0}^{n-1}\frac{(-1)^j}{2j+1},
\]
one checks from the same recurrence and the initial value \(h_1/k_1=4\) that
\[
h_n=4k_nL_n.
\]
Hence
\[
\frac{h_n}{k_n}=4L_n.
\]
\end{proof}
The hypothesis of Theorem~\ref{th6} is satisfied in this case. Indeed,
\[
B_{n+1}=4[(2n-1)!!]^2,\qquad k_n=(2n-1)!!,
\]
and the Leibniz tail satisfies
\[
4L_n-\pi=O\left(\frac1n\right).
\]
Therefore
\[
\frac{\varepsilon_n\varepsilon_{n+1}}{B_{n+1}}
=
\frac{k_nk_{n+1}(4L_n-\pi)(4L_{n+1}-\pi)}
{4[(2n-1)!!]^2}
=
O\left(\frac1n\right)\to0.
\]

Applying Theorem~\ref{th6}, we get
\[
\sum_{n=1}^{\infty}
\frac{2}{4[(2n-1)!!]^2}
(h_n-\pi k_n)^2
+
\frac{\pi^2}{4}
=
\pi.
\]
Rearranging terms and substituting
\[
h_n-\pi k_n=k_n(4L_n-\pi),
\]
we find
\[
\pi-\frac{\pi^2}{4}
=
\frac12\sum_{n=1}^{\infty}
\left(\frac{h_n}{k_n}-\pi\right)^2
=
8\sum_{n=1}^{\infty}
\left(
\sum_{j=n}^{\infty}\frac{(-1)^j}{2j+1}
\right)^2.
\]

\begin{remark}\label{rk6}

We can express the tail $\sum_{j=n}^\infty \frac{(-1)^j}{2j+1}$ using the digamma function $\psi(z)$. This relies on the standard series representation for the difference of digamma values, found in classical references such as \cite{abramowitz1965handbook}. We start from the Weierstrass product for the Gamma function:

$$\frac{1}{\Gamma(z)} = z e^{\gamma z}\prod_{k=1}^{\infty}\left(1+\frac{z}{k}\right)e^{-z/k}.$$

Taking logarithms and differentiating yields the logarithmic derivative

$$\psi(z)=\frac{\Gamma'(z)}{\Gamma(z)}=-\gamma+\sum_{k=0}^{\infty}\left(\frac{1}{k+1}-\frac{1}{k+z}\right).$$

Subtracting $\psi(a)$ from $\psi(b)$ gives the identity valid for $a,b \notin \{0,-1,-2,\dots\}$

$$\psi(b)-\psi(a)=\sum_{k=0}^{\infty}\left(\frac{1}{k+a}-\frac{1}{k+b}\right).$$

By rearranging the alternating sum,

$$\sum_{j=n}^\infty \frac{(-1)^j}{2j+1} = \frac{(-1)^n}{4} \sum_{k=0}^\infty \left( \frac{1}{k + \frac{2n+1}{4}} - \frac{1}{k + \frac{2n+3}{4}} \right).$$

Applying the identity above with $a=\frac{2n+1}{4}$ and $b=\frac{2n+3}{4}$, we obtain

$$\sum_{j=n}^\infty \frac{(-1)^j}{2j+1} = \frac{(-1)^n}{4} \left[ \psi\left(\frac{2n+3}{4}\right) - \psi\left(\frac{2n+1}{4}\right) \right].$$

Thus, the main identity can be written in terms of special functions as

$$\pi - \frac{\pi^2}{4} = \frac{1}{2} \sum_{n=1}^\infty \left[ \psi\left(\frac{2n+3}{4}\right) - \psi\left(\frac{2n+1}{4}\right) \right]^2.$$

\end{remark}

\subsection{An Identity Involving $\ln 2$.}

Let us use Euler's continued fraction \cite{euler1785transformatione} for
\(\ln(1+x)\) at \(x=1\):
\[
\ln 2
=
\cfrac{1}{1+\cfrac{1^2}{1+\cfrac{2^2}{1+\cfrac{3^2}{1+\ddots}}}}.
\]
Here \(a_{n+1}=1\), \(b_1=1\), and
\[
b_n=(n-1)^2\qquad (n\ge2).
\]
Thus
\[
B_n=((n-1)!)^2,\qquad n\ge1,
\qquad B_0=1.
\]
A straightforward induction shows that
\[
k_n=n!,
\qquad
\frac{h_n}{k_n}=S_n,
\qquad
S_n=\sum_{j=1}^{n}\frac{(-1)^{j+1}}{j},
\]
with the convention \(S_0=0\).

The hypothesis of Theorem~\ref{th6} is satisfied. Indeed,
\[
B_{n+1}=(n!)^2,\qquad k_n=n!,
\]
and the alternating harmonic tail satisfies
\[
S_n-\ln 2=O\left(\frac1n\right).
\]
Hence
\[
\frac{\varepsilon_n\varepsilon_{n+1}}{B_{n+1}}
=
\frac{n!(n+1)!(S_n-\ln2)(S_{n+1}-\ln2)}{(n!)^2}
=
O\left(\frac1n\right)\to0.
\]
Since \(a_0=0\), Theorem~\ref{th6} gives
\[
\ln 2
=
\sum_{n=0}^\infty \frac{1}{(n!)^2}(h_n-k_n\ln2)^2
=
\sum_{n=0}^\infty (S_n-\ln2)^2.
\]

Since $S_n - \ln 2 = -\sum_{j=n+1}^\infty \frac{(-1)^{j+1}}{j}$, we derive the identity:

$$\ln 2 = \sum_{n=0}^\infty \left( \sum_{j=n+1}^\infty \frac{(-1)^{j+1}}{j} \right)^2.$$

We can derive an equivalent version in terms of the digamma function by analyzing the tail term $\sum_{j=n+1}^\infty \frac{(-1)^{j+1}}{j}$. Grouping terms in pairs allows us to rewrite the sum:

$$\sum_{j=n+1}^\infty \frac{(-1)^{j+1}}{j} =\frac{(-1)^n}{2} \sum_{k=0}^\infty \left( \frac{1}{k + \frac{n+1}{2}} - \frac{1}{k + \frac{n+2}{2}} \right).$$

Applying the difference identity derived in Remark ~\ref{rk6} with $a = \frac{n}{2} + \frac{1}{2}$ and $b = \frac{n}{2} + 1$ yields

$$\ln 2 = \frac{1}{4} \sum_{n=0}^\infty \left[ \psi\left(\frac{n}{2} + 1\right) - \psi\left(\frac{n}{2} + \frac{1}{2}\right) \right]^2.$$

\subsection{A faster identity for \texorpdfstring{\(\log 2\)}{log 2}}

We now apply Theorem~\ref{th6} to the series
\[
\log 2=\sum_{m=1}^{\infty}\frac{1}{m2^m}.
\]
Put
\[
t_m=\frac{1}{m2^m}.
\]
Euler's transformation of a series into a continued fraction gives a generalized
continued fraction whose \(n\)-th convergent is
\[
S_n=\sum_{m=1}^{n}t_m.
\]
In our notation its coefficients are
\[
a_0=0,\qquad a_1=1,\qquad b_1=t_1,
\]
\[
a_n=t_{n-1}+t_n\quad(n\ge2),
\]
and
\[
b_2=-t_2,\qquad b_n=-t_{n-2}t_n\quad(n\ge3).
\]
Equivalently,
\[
a_n=\frac{3n-1}{n(n-1)2^n}\quad(n\ge2),
\]
and
\[
b_2=-\frac18,\qquad
b_n=-\frac{1}{n(n-2)2^{2n-2}}\quad(n\ge3).
\]

Let
\[
R_n=\log2-S_n
=
\log2-\sum_{m=1}^{n}\frac{1}{m2^m}.
\]
For this Euler continued fraction one has
\[
\frac{h_n}{k_n}=S_n,
\qquad
k_n=t_1t_2\cdots t_{n-1}\quad(n\ge1),
\]
and
\[
B_{n+1}=b_1b_2\cdots b_{n+1}
=
(-1)^n k_n^2t_nt_{n+1}.
\]
Therefore
\[
\varepsilon_n=h_n-(\log2)k_n
=
-k_nR_n.
\]
Substituting this into Theorem~\ref{th6}, we get
\[
\log2
=
\frac{(\log2)^2}{t_1}
+
\sum_{n=1}^{\infty}
(-1)^n
\left(
\frac1{t_n}+\frac1{t_{n+1}}
\right)
R_n^2.
\]
Since \(t_1=1/2\) and
\[
\frac1{t_n}+\frac1{t_{n+1}}
=
n2^n+(n+1)2^{n+1}
=
(3n+2)2^n,
\]
we obtain
\[
\log2
=
2(\log2)^2
+
\sum_{n=1}^{\infty}
(-1)^n(3n+2)2^n
\left(
\log2-\sum_{m=1}^{n}\frac1{m2^m}
\right)^2.
\]

Equivalently, putting
\[
E_n=2^nR_n
=
\sum_{j=1}^{\infty}\frac{1}{(n+j)2^j},
\]
we get the cleaner form
\[
\log2
=
2(\log2)^2
+
\sum_{n=1}^{\infty}
(-1)^n\frac{3n+2}{2^n}
\left(
\sum_{j=1}^{\infty}\frac{1}{(n+j)2^j}
\right)^2.
\]

\section{Further directions.}

\subsection{Possible generalizations of the main theorem}

A natural direction is to provide multidimensional analogues for the formulas

\[
  \sum_{n=-1}^{\infty} a_{n+1}\,(h_n - \xi k_n)^2 = \xi,
  \qquad
  \sum_{n=-1}^{\infty} a_{n+1}\,|h_n - \xi k_n| = \xi + 1,
\]
replacing the classic algorithm for continued fractions with, for example, the Jacobi-Perron algorithm  \cite{hawkins2008continued, abramowitz1965handbook}.  The first proof for the pair of identities \cite{kalinin2019tropical} shows that they are geometric in nature, the first related to the area and the second to the perimeter, both of the triangle of vertices $(0,0)$, $(1,0)$, and $(0,\xi)$.  Therefore, a possible $d$-dimensional generalization would consist of a family of identities for the perimeter, area, volume, and so on, up to the $d$-volume of a $d$-simplex.  Following this, we would like to be able to give an analogue to Theorem \ref{th3}  for purely periodic irrationals of degree $d$, that is, roots of rational polynomials of degree $d$ whose expansion as a multidimensional continued fraction is purely periodic. 

This generalization is not trivial. For algorithms such as the Jacobi--Perron
algorithm, Bernstein~\cite{bernstein1964periodical,bernstein1966new} gives
examples of families of cubic irrationalities whose expansions are non-periodic,
as well as examples for which the expansion is periodic. Thus one encounters
Hermite's problem: to find a multidimensional continued-fraction algorithm for
which periodicity can be guaranteed in a way analogous to the quadratic case.
Such a result would be needed in order to obtain multidimensional analogues of
Theorems~\ref{th1} and~\ref{th2}.

A second problem is to find the correct multidimensional analogue of the error
sequence \(\varepsilon_n\). This is necessary for any genuine generalization of
the identities proved above.

As an example of what we are looking for, consider the Jacobi--Perron algorithm
for algebraic irrationalities \(\eta\) of degree \(d\) whose expansion is purely
periodic. We take
\[
\mathcal B=(1,\eta,\dots,\eta^{d-1})^T.
\]
Suppose that the \(n\)-th step of the algorithm is represented by a matrix
\(M_n\in GL_d(\mathbb Z)\), with the convention that the period matrix is
\[
M_N=M_0M_1\cdots M_{N-1}.
\]
Let \(C_n\) denote the relevant convergent column, for example the last column
in this convention. We consider the scalar quantities
\[
\vartheta_n=\langle C_n,\mathcal B\rangle
\]
as a possible multidimensional analogue of the error sequence. In this sense
one obtains the following conditional analogue of the main theorem.

\begin{theorem}\label{general}

Suppose that the JPA expansion of \(\eta\) is purely periodic and that, in the
chosen convention, the corresponding convergent columns satisfy
\[
C_{mN+r}=M_N^mC_r.
\]
Assume moreover that
\[
M_N^T\mathcal B=\mu\mathcal B
\]
for some \(\mu\) with \(|\mu|<1\). Then for any \(m\ge0\) and residue \(r\) we have

 $$\vartheta_{mN+r}=\mu^m \vartheta_{r}.$$
    
\end{theorem}

\begin{proof}
From the definition of the algorithm being purely periodic with period \(N\), the
convergent vector at step \(mN+r\) is
\[
C_{mN+r}=M_N^mC_r.
\]
Therefore
\[
\vartheta_{mN+r}
=
\langle M_N^mC_r,\mathcal B\rangle.
\]
Using
\[
\langle Ax,y\rangle=\langle x,A^Ty\rangle,
\]
we get
\[
\vartheta_{mN+r}
=
\langle C_r,(M_N^T)^m\mathcal B\rangle.
\]
By hypothesis,
\[
(M_N^T)^m\mathcal B=\mu^m\mathcal B.
\]
Hence
\[
\vartheta_{mN+r}
=
\mu^m\langle C_r,\mathcal B\rangle
=
\mu^m\vartheta_r.
\]
\end{proof}

The above theorem depends entirely on the hypothesis that \(\mathcal B\) is a
left eigenvector of the period matrix \(M_N\), which, unlike in the quadratic
case, cannot always be guaranteed.

\section*{Acknowledgements}
We thank the referees for the careful reading of our manuscript.
Kevin Calderon would like to thank the International Center for Mathematical Sciences of 
Bulgarian Academy of Sciences, and the Center for Research and Advanced Studies IPN, where this work was carried out.  He also thanks Nikita Kalinin for his tremendous support and guidance, Ernesto Lupercio, Mikhail Shkolnikov, Higinio Serrano, and Ramiro Huh-Sah for their valuable comments and suggestions.

He also wants to thank God for the wonderful mathematics he has been given.

\section*{Disclosure statement}
No conflict of interest has been reported by the authors.


\medskip
\noindent MSC2020: Primary 11A55; Secondary 11J70, 11J68, 11R11, 11B39, 33B15.


\end{document}